\newtheorem{theorem}{Theorem}[section]
\newtheorem{proposition}[theorem]{Proposition}
\newtheorem{lemma}[theorem]{Lemma}
\numberwithin{equation}{section}
\begin{document}

\title[Poisson structure on character varieties]{Poisson structure on character varieties}

\author[I. Biswas]{Indranil Biswas}

\address{School of Mathematics, Tata Institute of Fundamental Research,
Homi Bhabha Road, Mumbai 400005}

\email{indranil@math.tifr.res.in}

\author[L. C. Jeffrey]{Lisa C. Jeffrey}

\address{Department of Mathematics,
University of Toronto, Toronto, Ontario, Canada}

\email{jeffrey@math.toronto.edu}

\subjclass[2010]{53D17, 14H60, 14D21}

\keywords{Surface group, character variety, symplectic form, Poisson structure}

\date{}

\begin{abstract}
We show that the character variety for a $n$-punctured oriented surface has a natural Poisson structure.
\bigskip

\noindent
{\it R\'esum\'e.}\,\, Nous d\'emontrons que la vari\'et\'e des caract\`eres d'une surface compacte
orient\'ee perc\'ee en $n$ points est dot\`ee d'une structure de Poisson naturelle.
\end{abstract}

\maketitle

\section{Introduction}\label{sec0}

Let $X$ be a compact connected oriented surface. Given any real or complex reductive Lie group $G$, the
character variety ${\mathcal R}(X,\, G)$ parametrizes the equivalence classes of
completely reducible $G$--homomorphisms of the fundamental group of $X$. Alternatively,
${\mathcal R}(X,\, G)$ parametrizes the isomorphism classes of
completely reducible flat $G$--connections on $X$. It is known that ${\mathcal R}(X,\, G)$
has a natural symplectic structure; this symplectic structure was constructed by
Atiyah--Bott and Goldman in \cite{AB}, \cite{G} respectively.

Fix finitely many points $\{x_1,\,\cdots,\, x_m\}$ of $X$, and fix a conjugacy class $C_i$
in $G$ for each $x_i$. Let ${\mathcal R}(X_0,\, G)$ be the character variety for
$X_0\,=\, X\setminus \{x_1,\,\cdots,\, x_m\}$. Let ${\mathcal R}(X_0,\, G)_C \, \subset\,
{\mathcal R}(X_0,\, G)$ be the locus of all flat $G$--connections on $X_0$ for which
the local monodromy around each $x_i$ lies in the conjugacy class $C_i$. It is
known that this subset is equipped with a natural symplectic structure \cite{BG}, \cite{GHJW}.
When $G$ is a compact group, and $X$ is equipped with a complex structure, then ${\mathcal R}(X_0,\,
G)_C$ is the moduli space of semistable parabolic $G_{\mathbb C}$--bundles \cite{MS},
where $G_{\mathbb C}$ is the complexification of $G$.

We prove that ${\mathcal R}(X_0,\, G)$ has a natural Poisson structure (see Section 
\ref{se3.2}). The above submanifolds ${\mathcal R}(X_0,\, G)_C$ of it equipped with symplectic 
structure are the symplectic leaves for this Poisson structure.

This result has been known for  many years -- for example, see the proof 
given in 
M. Audin's article (\cite{Au}, Theorem 2.2.1). However Audin's proof
proceeds by using loop groups and central extensions of the 
Lie algebra of a loop group. Our proof is much simpler; one of the reasons for it
is that we are able to use the known result that ${\mathcal R}(X_0,\, G)_C$ are
symplectic manifolds. 

\section{Tangent and cotangent bundles of character varieties}

Let $X$ be a compact connected oriented $C^\infty$ surface. Fix a nonempty finite
subset
$$
D\, :=\, \{x_1,\,\cdots,\, x_m\}\, \subset\, X\, .
$$
Let $X_0\, :=\, X\setminus D$ be the complement. Fix a base point $x_0\, \in\, X_0$. For
notational convenience, the fundamental group $\pi_1(X_0,\, x_0)$ will be denoted by $\Gamma$.

Let $G$ be a connected reductive algebraic Lie group, which is defined over
$\mathbb R$ or $\mathbb C$. This implies that 
the Lie algebra ${\mathfrak g}\, :=\, \text{Lie}(G)$
admits a $G$--invariant nondegenerate symmetric bilinear form. Fix a
$G$--invariant nondegenerate symmetric bilinear form
\begin{equation}\label{e2}
B\, \in\, \text{Sym}^2({\mathfrak g}^*)\, .
\end{equation}

Consider the character variety
\begin{equation}\label{e3}
{\mathcal R}\, :=\, {\mathcal R}(X_0,\, G) \,:=\, \text{Hom}(\Gamma,\, G)^0/G\, ,
\end{equation}
where $\text{Hom}(\Gamma,\, G)^0\, \subset\, \text{Hom}(\Gamma,\, G)$ is the
locus of homomorphisms with completely reducible image.
We note that the points of ${\mathcal R}$ correspond to the equivalence classes of
homomorphisms $\rho\, :\, \Gamma\, \longrightarrow\, G$ such that the Zariski closure of
$\rho(\Gamma)$ is a reductive subgroup of $G$.

Take any homomorphism $\rho\, :\, \Gamma\, \longrightarrow\, G$. Let
$E^\rho_G\, \longrightarrow\, X_0$ be the corresponding principal $G$--bundle on $X_0$
equipped with a flat connection.
We briefly recall the construction of the flat bundle $E^\rho_G$. Let $q_0\,:\,
(\widetilde{X}_0,\, \widetilde{x}_0)\, \longrightarrow\, (X_0,\, x_0)$ be
the universal cover of $X_0$ for the base point $x_0$.
The total space of $E^\rho_G$ is the quotient of $\widetilde{X}_0\times G$, where
two points $(x_1,\, g_1)$ and $(x_2,\, g_2)$ of $\widetilde{X}_0\times G$ are identified
if there is an element $\gamma\, \in\, \Gamma$ such that $x_2\,=\, x_1\gamma$ and
$g_2\,=\,\rho(\gamma)^{-1}g_1$ (the fundamental group $\Gamma$ acts on
$\widetilde{X}_0$ as deck transformations). The projection of $E^\rho_G$ to $X_0$ is given by the
map $(x,\, g)\, \longmapsto\, q_0(x)$. The action of $G$ on $\widetilde{X}_0\times G$
given by the right--translation action of $G$ on itself
produces an action of $G$ on the quotient space $E^\rho_G$, making $E^\rho_G$ a principal
$G$--bundle over $X_0$. The trivial connection on the trivial principal $G$--bundle
$\widetilde{X}_0\times G\, \longrightarrow\, \widetilde{X}_0$ descends to a flat connection
on the principal $G$--bundle $E^\rho_G$. This flat connection on $E^\rho_G$ will be
denoted by $\widehat{\nabla}^\rho$.

The flat connection $\widehat{\nabla}^\rho$ induces a flat connection on every fiber
bundle associated to the principal $G$--bundle $E^\rho_G$. In particular, it induces
a flat connection on the adjoint vector bundle $\text{ad}(E^\rho_G)$ associated
to $E^\rho_G$ for the adjoint action of $G$ on the Lie algebra $\mathfrak g$. This
induced flat connection on $\text{ad}(E^\rho_G)$ will be denoted by $\nabla^\rho$.

Let
\begin{equation}\label{e4}
\underline{\text{ad}}(E^\rho_G)\, \longrightarrow\, X_0
\end{equation}
be the locally constant sheaf on $X_0$ given by the sheaf of covariant constant sections of the
vector bundle $\text{ad}(E^\rho_G)$ for the flat connection $\nabla^\rho$. It is known
that the tangent spaces of ${\mathcal R}$ defined in \eqref{e3} have the
following description: For any $\rho\, \in\, {\mathcal R}$,
\begin{equation}\label{e5}
T_\rho{\mathcal R}\,=\, H^1(X_0,\, \underline{\text{ad}}(E^\rho_G))\, ,
\end{equation}
where $\underline{\text{ad}}(E^\rho_G)$ is constructed in \eqref{e4} \cite{G}, \cite{AB}. Since
$X_0$ is oriented, this gives the following description of the cotangent space:
\begin{equation}\label{e5b}
T^*_\rho{\mathcal R}\,=\, H^1(X_0,\, \underline{\text{ad}}(E^\rho_G))^*
\,=\, H^1_c(X_0,\, \underline{\text{ad}}(E^\rho_G)^*)\, ,
\end{equation}
where $H^i_c$ is the compactly supported cohomology \cite{GHJW}, \cite{G}, \cite{BG}, and
$\underline{\text{ad}}(E^\rho_G)^*$ is the dual local system.
The pairing between $H^1(X_0,\, \underline{\text{ad}}(E^\rho_G))$ and
$H^1_c(X_0,\, \underline{\text{ad}}(E^\rho_G)^*)$ is constructed in the following way:
$$
H^1_c(X_0,\, \underline{\text{ad}}(E^\rho_G)^*)\otimes H^1(X_0,\,\underline{\text{ad}}(E^\rho_G))
\,\longrightarrow\, H^2_c(X_0,\,\underline{\text{ad}}(E^\rho_G)^*\otimes
\underline{\text{ad}}(E^\rho_G))
$$
\begin{equation}\label{cp}
\longrightarrow\, H^2_c(X_0,\, k)\,=\, H^2_c(X,\, k)\,=\, k\, ,
\end{equation}
where $k$ is either $\mathbb R$ or $\mathbb C$ depending on whether the Lie group $G$
is real or complex.

The bilinear form $B$ in \eqref{e2}, being $G$--invariant, produces a fiberwise
symmetric nondegenerate bilinear form
$\widetilde{\mathcal B}\, \in\, C^\infty(X_0,\, \text{Sym}^2(\text{ad}(E^\rho_G)^*))$.
This section $\widetilde{\mathcal B}$ is clearly covariant constant with respect to the flat
connection on $\text{Sym}^2(\text{ad}(E^\rho_G)^*)$ induced by the above flat connection
$\nabla^\rho$ on $\text{ad}(E^\rho_G)$ associated to $\rho$. In other words, we have
$$
\widetilde{\mathcal B}\, \in\, H^0(X_0,\, \text{Sym}^2(\underline{\text{ad}}(E^\rho_G)^*))\, ;
$$
note that $\text{Sym}^2(\underline{\text{ad}}(E^\rho_G)^*)$ coincides with the local
system on $X_0$ defined by the sheaf of covariant constant sections of
$\text{Sym}^2(\text{ad}(E^\rho_G)^*)$.
Consequently, $\widetilde{\mathcal B}$ produces an isomorphism of local systems
\begin{equation}\label{e6}
{\mathcal B}\, :\, \underline{\text{ad}}(E^\rho_G)\, \stackrel{\sim}{\longrightarrow}\,
\underline{\text{ad}}(E^\rho_G)^*\, .
\end{equation}
Combining \eqref{e5b} and \eqref{e6}, we have
\begin{equation}\label{e7}
T^*_\rho{\mathcal R}\,=\, H^1_c(X_0,\, \underline{\text{ad}}(E^\rho_G))\, .
\end{equation}
For any sheaf $F$ on $X_0$, there is a natural homomorphism $H^1_c(X_0,\, F)\,
\longrightarrow\, H^1(X_0,\, F)$ given by the inclusion homomorphism of the
corresponding sheaves. In particular, we have a homomorphism
\begin{equation}\label{hc}
\widetilde{\Phi}_\rho\, :\, H^1_c(X_0,\, \underline{\text{ad}}(E^\rho_G)) \, \longrightarrow\,
H^1(X_0,\, \underline{\text{ad}}(E^\rho_G))\, ,
\end{equation}
which, using \eqref{e5} and \eqref{e7}, gives a homomorphism
$T^*_\rho{\mathcal R}\, \longrightarrow\,T_\rho{\mathcal R}$.
This homomorphism $T^*_\rho{\mathcal R}\, \longrightarrow\,T_\rho{\mathcal R}$ defines an element
$\Phi_\rho\, \in\, T_\rho{\mathcal R}\otimes T_\rho{\mathcal R}$.

\begin{lemma}\label{lem1}
The above element $\Phi_\rho$ lies in the subspace $\wedge^2 T_\rho{\mathcal R}\,
\subset\, T_\rho{\mathcal R}\otimes T_\rho{\mathcal R}$.
\end{lemma}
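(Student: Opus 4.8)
The plan is to recast membership in $\wedge^2 T_\rho{\mathcal R}$ as the antisymmetry of a bilinear form, and then to read off that antisymmetry from the graded--commutativity of the cup product. Recall the elementary fact that for a finite--dimensional vector space $V$ an element $T\,\in\,V\otimes V$ corresponds to a linear map $\phi_T\,:\,V^*\,\longrightarrow\, V$, and that $T$ lies in $\wedge^2 V$ if and only if the bilinear form
$$
b_T(\alpha,\,\beta)\,:=\,\langle\alpha,\, \phi_T(\beta)\rangle\, ,\qquad \alpha,\,\beta\,\in\, V^*\, ,
$$
is antisymmetric, where $\langle\cdot,\,\cdot\rangle$ is the evaluation pairing between $V^*$ and $V$. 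Taking $V\,=\,T_\rho{\mathcal R}$ and $\phi_T\,=\,\widetilde\Phi_\rho$ from \eqref{hc}, and using the identifications \eqref{e5}, \eqref{e5b} and \eqref{e7}, it therefore suffices to prove that the bilinear form on $T^*_\rho{\mathcal R}\,=\,H^1_c(X_0,\, \underline{\text{ad}}(E^\rho_G))$ given by
$$
b(\alpha,\,\beta)\,=\,\langle\alpha,\, \widetilde\Phi_\rho(\beta)\rangle
$$
is antisymmetric, where $\langle\cdot,\,\cdot\rangle$ is the pairing \eqref{cp}, now read, via the isomorphism ${\mathcal B}$ of \eqref{e6}, as a pairing between $H^1_c(X_0,\, \underline{\text{ad}}(E^\rho_G))$ and $H^1(X_0,\, \underline{\text{ad}}(E^\rho_G))$.

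I would then identify $b$ with a cup--product pairing. The homomorphism $\widetilde\Phi_\rho$ is by construction the natural ``forget supports'' map $j\,:\,H^1_c(X_0,\, \underline{\text{ad}}(E^\rho_G))\,\longrightarrow\, H^1(X_0,\, \underline{\text{ad}}(E^\rho_G))$, and the pairing \eqref{cp} is the cup product followed by contraction with the symmetric form $B$. Since the cup product is compatible with $j$, meaning that the compactly supported cup product $H^1_c\otimes H^1_c\,\longrightarrow\, H^2_c$ equals the composite of $\mathrm{id}\otimes j$ with the mixed cup product $H^1_c\otimes H^1\,\longrightarrow\, H^2_c$, the form $b$ is precisely
$$
H^1_c(X_0,\,\underline{\text{ad}}(E^\rho_G))\otimes H^1_c(X_0,\,\underline{\text{ad}}(E^\rho_G))
\,\stackrel{\cup}{\longrightarrow}\, H^2_c(X_0,\,\underline{\text{ad}}(E^\rho_G)\otimes \underline{\text{ad}}(E^\rho_G))
\,\stackrel{B}{\longrightarrow}\, H^2_c(X_0,\, k)\,=\,k\, .
$$

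To finish I would invoke graded--commutativity of the cup product: for classes of degree one one has $\alpha\cup\beta\,=\,-\,\tau_*(\beta\cup\alpha)$, where $\tau$ denotes the transposition of the two coefficient factors $\underline{\text{ad}}(E^\rho_G)\otimes \underline{\text{ad}}(E^\rho_G)$. Because $B$ is symmetric it is invariant under $\tau$, so contracting with $B$ yields $b(\alpha,\,\beta)\,=\,-\,b(\beta,\,\alpha)$. Thus $b$ is antisymmetric and, by the reduction in the first paragraph, $\Phi_\rho\,\in\,\wedge^2 T_\rho{\mathcal R}$.

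The step I expect to be the main obstacle is the identification carried out in the second paragraph: one must track the chain of isomorphisms \eqref{e5}, \eqref{e5b} and \eqref{e7} --- Poincar\'e duality together with the self--duality ${\mathcal B}$ of the local system --- to be sure that the canonical evaluation pairing between $T^*_\rho{\mathcal R}$ and $T_\rho{\mathcal R}$ coincides, with no spurious sign or asymmetry, with the $B$--twisted cup--product pairing of \eqref{cp}. Once $b$ is written as a symmetric--form--twisted cup product, the skew--symmetry is a formal consequence of graded--commutativity.
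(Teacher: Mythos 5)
Your proposal is correct and takes essentially the same route as the paper: the paper's proof also reduces the lemma to the antisymmetry $\langle\alpha,\, \widetilde{\Phi}_\rho(\beta)\rangle\,=\, -\langle\beta,\, \widetilde{\Phi}_\rho(\alpha)\rangle$ of the pairing \eqref{cp} composed with \eqref{e6}, which it states as equation \eqref{el} and from which the lemma follows. The only difference is that the paper asserts \eqref{el} without further comment, whereas you supply the justification (compatibility of the cup product with the forget--supports map, graded--commutativity in degree one, and the symmetry of $B$) that the paper leaves implicit.
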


\begin{proof}
Take $\alpha,\, \beta\, \in\, H^1_c(X_0,\, \underline{\text{ad}}(E^\rho_G))\,=\,
T^*_\rho{\mathcal R}$. For the pairing $\langle - ,\,-\rangle$ in \eqref{cp}, we have
\begin{equation}\label{el}
\langle\alpha,\, \widetilde{\Phi}_\rho(\beta)\rangle\,=\, - \langle\beta,\,
\widetilde{\Phi}_\rho(\alpha)\rangle\, ,
\end{equation}
where $\widetilde{\Phi}_\rho$ is the homomorphism in \eqref{hc}; the isomorphism
in \eqref{e6} has been used in \eqref{el}. The lemma follows from \eqref{el}.
\end{proof}

The above pointwise construction of $\Phi_\rho$, being canonical, produces a section
\begin{equation}\label{e8}
\Phi\, \in\, C^\infty({\mathcal R},\, \wedge^2T{\mathcal R})\, .
\end{equation}

We will show, in the next section, that this $\Phi$ defines a Poisson structure on
$\mathcal R$.

\section{Poisson structure on $\mathcal R$}

\subsection{A criterion for Poisson structure}

Let $M$ be a smooth manifold. Let
$$
\Theta\, \in\, C^\infty(M,\, \wedge^2TM)
$$
be a smooth section. For any point $x\, \in\, M$, let
\begin{equation}\label{ty}
\Theta_x\, :\, T^*_xM\, \longrightarrow\, T_xM
\end{equation}
be the homomorphism defined by the equation $w(\Theta_x (v))\,=\, \Theta(x)(v\wedge w)$
for all $v,\, w\, \in\, T^*_xM$. The image
$$
V_x\, :=\, \Theta_x(T^*_xM)\, \subset\, T_xM
$$
is equipped with a symplectic form. To prove this, let
$$
\varphi\, :\, T^*_xM\, \longrightarrow\, V^*_x
$$
be the dual of the inclusion map $V_x\, \hookrightarrow\, T_xM$, so $\varphi$ is
surjective. We will prove that $\varphi$ vanishes on the subspace
$\text{kernel}(\Theta_x)\, \subset\, T^*_xM$. For any $v,\, w\, \in\, T^*_xM$, we have
\begin{equation}\label{sk}
w(\Theta_x(v))+ v(\Theta_x(w))\,=\, 0\, ,
\end{equation}
because $\Theta$ is skew-symmetric. So if $v\, \in\, \text{kernel}(\Theta_x)$, then
$v(\Theta_x(w))\,=\, 0$ for all $w$, implying that $v(V_x)\,=\, 0$. So
$\varphi$ vanishes on $\text{kernel}(\Theta_x)$, and hence it
descends to a homomorphism
$$
\widehat{\varphi}_x\, :\, \frac{T^*_xM}{\text{kernel}(\Theta_x)}\,=\, \text{image}(\Theta_x)
\,=\, V_x\, \longrightarrow\, V^*_x\, .
$$
{}From \eqref{sk} it follows that $\widehat{\varphi}'_x\, \in\, \wedge^2 V^*_x$, where
$\widehat{\varphi}'_x$ is the bilinear form on $V_x$ defined by $\widehat{\varphi}_x$. Since
$\varphi$ is surjective, it follows that this $\widehat{\varphi}_x$ is
also surjective. This implies that the bilinear form $\widehat{\varphi}'_x$ is nondegenerate.
So, $\widehat{\varphi}'_x$ is a symplectic form on $V_x$.

The section $\Theta$ is a called a Poisson structure if the Schouten--Nijenhuis bracket
$[\Theta,\, \Theta]$ vanishes identically \cite{Ar}. An equivalent formulation of
this definition is the following: Given a pair of $C^\infty$ functions $f_1$ and $f_2$
on $M$, define the function
$$
\{f_1,\, f_2\}\,=\, \Theta((df_1)\wedge (df_2))\, .
$$
Then $\Theta$ is Poisson if and only if
\begin{equation}\label{ji}
\{f_1,\, \{f_2,\, f_3\}\}+ \{f_2,\, \{f_3,\, f_1\}\} + \{f_3,\, \{f_1,\, f_2\}\}\,=\, 0
\end{equation}
for all $C^\infty$ functions $f_1,\, f_2,\, f_3$.

Let $\widehat{\Theta}\, :=\, T^*M\, \longrightarrow\, TM$ be the homomorphism
given by $\Theta$, so $\widehat{\Theta}(x)\,=\, \Theta_x$ for every $x\, \in\, X$.

The following proposition is in the literature already. For example, see Proposition 1.8 and 
Remark 1.10 of the notes \cite{mein} from E. Meinrenken's 2017 graduate course on Poisson 
geometry. However we have included a proof for completeness.

\begin{proposition}\label{prop1}
The section $\Theta$ is Poisson if and only if the following two hold:
\begin{enumerate}
\item The subsheaf $\widehat{\Theta}(T^*M)\, \subset\, TM$ is closed under the
Lie bracket operation.

\item For any leaf $\mathbb L$ of the nonsingular locus of the integrable
distribution $\widehat{\Theta}(T^*M)$, let $\widehat{\varphi}'_{\mathbb L}$
be the two--form on $\mathbb L$ defined by the equation
$$
\widehat{\varphi}'_{\mathbb L}(x)(v_1,\, v_2)\,=\, \widehat{\varphi}'_x(v_1,\, v_2)
$$
for all $x\, \in\, \mathbb L$, and $v_1,\, v_2\, \in\, T_x\mathbb L$, where $\widehat{\varphi}'_x$
is constructed above.
Then $\widehat{\varphi}'_{\mathbb L}$ is a symplectic form on $\mathbb L$.
\end{enumerate}
\end{proposition}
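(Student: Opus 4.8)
The plan is to move between the Jacobi identity \eqref{ji} and the geometry of the distribution $\widehat{\Theta}(T^*M)$ by means of the Hamiltonian vector fields. First I would introduce, for each function $f$ on $M$, the field $X_f\, :=\, \widehat{\Theta}(df)$; then $\{f_1,\, f_2\}\,=\, X_{f_1}(f_2)$, and as $f$ varies the values $X_f(x)$ span $V_x\,=\, \Theta_x(T^*_xM)$. A direct calculation from the definition of the bracket yields, for all $f_1,\, f_2,\, f_3$,
\begin{equation}\label{pz1}
\bigl([X_{f_1},\, X_{f_2}]-X_{\{f_1,f_2\}}\bigr)(f_3)\,=\, \{f_1,\,\{f_2,\,f_3\}\}+\{f_2,\,\{f_3,\,f_1\}\}+\{f_3,\,\{f_1,\,f_2\}\}\, ,
\end{equation}
so the left side of \eqref{ji} measures precisely the failure of $[X_{f_1},\, X_{f_2}]$ to equal $X_{\{f_1,f_2\}}$. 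I would also record the identity $\text{kernel}(\Theta_x)\,=\, \text{Ann}(V_x)$, the annihilator of $V_x$ in $T^*_xM$, which is forced by \eqref{sk} together with the fact (established above) that $\widehat{\varphi}_x$ is an isomorphism onto $V^*_x$. Consequently $X_f(x)$ depends only on the restriction of $f$ to the leaf of $\widehat{\Theta}(T^*M)$ through $x$, and in fact $X_f$ is the Hamiltonian vector field of $f|_{\mathbb L}$ for the form $\widehat{\varphi}'_{\mathbb L}$; hence along any leaf $\mathbb L$ the ambient bracket $\{f_1,\, f_2\}$ agrees with the Poisson bracket determined by $\widehat{\varphi}'_{\mathbb L}$.

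For the forward implication, assume $\Theta$ is Poisson. Then the right side of \eqref{pz1} vanishes, so $[X_{f_1},\, X_{f_2}]\,=\, X_{\{f_1,f_2\}}$, which is again a section of $\widehat{\Theta}(T^*M)$; as this sheaf is generated over $C^\infty(M)$ by the fields $X_f$, the Leibniz rule for the Lie bracket shows it is closed under $[\,\cdot\,,\,\cdot\,]$, giving (1). For (2), the form $\widehat{\varphi}'_x$ is already shown above to be nondegenerate on $V_x\,=\, T_x\mathbb L$, so it remains only to see that $\widehat{\varphi}'_{\mathbb L}$ is closed. But the Poisson bracket on $\mathbb L$ induced by $\widehat{\varphi}'_{\mathbb L}$ coincides with the restriction of $\{\,\cdot\,,\,\cdot\,\}$, which satisfies \eqref{ji}; and for a nondegenerate two-form the Jacobi identity of its bracket is equivalent to $d\widehat{\varphi}'_{\mathbb L}\,=\,0$. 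Hence $\widehat{\varphi}'_{\mathbb L}$ is symplectic.

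For the converse, assume (1) and (2). On the nonsingular locus the rank of $\Theta_x$ is locally constant, so by (1) the Frobenius theorem integrates $\widehat{\Theta}(T^*M)$ to a foliation whose leaves are the $\mathbb L$. On each such leaf $\widehat{\varphi}'_{\mathbb L}$ is symplectic, hence closed, so the associated bracket satisfies the Jacobi identity; since that bracket is the restriction of $\{\,\cdot\,,\,\cdot\,\}$, the cyclic sum in \eqref{ji} vanishes at every point of the nonsingular locus. Finally, this cyclic sum is tensorial in $df_1,\, df_2,\, df_3$ — up to a constant it is the contraction of the trivector $[\Theta,\, \Theta]$ with $df_1\wedge df_2\wedge df_3$ — so it depends continuously on the point. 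Because $x\,\longmapsto\, \text{rank}(\Theta_x)$ is lower semicontinuous, the nonsingular locus is open and dense, and the vanishing on it propagates by continuity to all of $M$. Thus \eqref{ji} holds identically and $\Theta$ is Poisson.

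The main obstacle I anticipate is the converse direction's passage from the nonsingular locus to all of $M$: one must know that the Jacobiator is genuinely tensorial (so that its vanishing is a closed condition), that the regular locus is dense, and that the ambient bracket really does restrict to the intrinsic Poisson bracket of each leaf. This last point rests on the identification $\text{kernel}(\Theta_x)\,=\,\text{Ann}(V_x)$ noted above, without which $\{f_1,\, f_2\}|_{\mathbb L}$ need not depend only on $f_1|_{\mathbb L}$ and $f_2|_{\mathbb L}$, and the comparison between the two brackets would break down.
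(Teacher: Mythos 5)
Your proof is correct, and its core --- the converse direction --- runs along the same lines as the paper's: identify the ambient bracket along a leaf with the bracket of the intrinsic two--form $\widehat{\varphi}'_{\mathbb L}$ (your identification $\text{kernel}(\Theta_x)\,=\,\text{Ann}(V_x)$ is exactly the content of the paper's function--level statement \eqref{s1}, repackaged through Hamiltonian vector fields), and then invoke condition (2) to get the Jacobi identity leafwise. But you go beyond the paper in two genuine respects. First, the paper dismisses the forward implication with ``it is standard,'' whereas you actually prove it, via the identity expressing the Jacobiator as the obstruction to $[X_{f_1},\,X_{f_2}]\,=\,X_{\{f_1,f_2\}}$ together with the observation that the image sheaf $\widehat{\Theta}(T^*M)$ is generated over $C^\infty(M)$ by the Hamiltonian fields $X_f$. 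Second, and more importantly, the paper's converse argument establishes the Jacobi identity only at points of the nonsingular locus of the distribution and says nothing about singular points, while you close this gap: the Jacobiator is tensorial (up to a constant it is the contraction of the trivector $[\Theta,\,\Theta]$ against $df_1\wedge df_2\wedge df_3$), the nonsingular locus is open and dense by lower semicontinuity of $x\,\longmapsto\,\text{rank}(\Theta_x)$, and so the vanishing of \eqref{ji} on that locus propagates by continuity to all of $M$. This last step is actually needed for the proposition as stated, so your write--up is, if anything, more complete than the paper's; what the paper's shorter treatment buys is brevity, at the cost of leaving both the forward direction and the singular locus to the reader.
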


\begin{proof}
If $\Theta$ is Poisson, it is standard that the above two conditions hold. We shall
prove the converse.

Take any point $x\, \in\, M$ where the distribution $\widehat{\Theta}(T^*M)$ is nonsingular,
meaning the dimension of the subspace $\widehat{\Theta}(y)(T^*_yM)\, \subset\, T_yM$ is
unchanged for all points $y$ in an open neighborhood of $x$. Let ${\mathbb L}$ denote
the leaf, passing through $x$, of the foliation restricted to a sufficiently small open neighborhood
$U$ of $x$ in $M$. For any two smooth functions $f,\, g$ defined on $U$, consider the function
$$
\{f,\, g\}\, :\, {\mathbb L}\, \longmapsto\, {\mathbb R}\, ,\ \
y\, \longmapsto\, \Theta(y)(df(y),\, dg(y))\, \in\, \mathbb R\, .
$$
Let $h$ be a smooth function defined on $U$ such that $h\vert_{\mathbb L}\,=\,
f\vert_{\mathbb L}$. We will prove that
\begin{equation}\label{s1}
\{f,\, g\}\,=\, \{h,\, g\}\, .
\end{equation}

Note that $f-h$ vanishes on $\mathbb L$. For notational convenience, denote the function $f-g$ 
by $\delta$. To prove \eqref{s1}, consider the function $\{f-h,\, g\}$. For any $y\, \in\, 
{\mathbb L}$, we have
\begin{equation}\label{e9}
\{\delta,\, g\}(y) \,=\, dg(y)(\Theta_y(d\delta(y)))\,=\, -d\delta(y) (\Theta_y(dg(y)))\, ,
\end{equation}
where $\Theta_y$ is constructed as in \eqref{ty}. The pullback of the $1$--form $d\delta$ to the
submanifold ${\mathbb L}\, \subset\, U$ vanishes identically because the restriction of $\delta$
to $\mathbb L$ is identically zero. On the other hand, the tangent vector $\Theta_y(dg(y))\,\in\,
T_y U$ lies in the subspace ${\mathbb L}_y\, \subset\, T_yU$ (recall that ${\mathbb L}_y\,=\,
\Theta_y(T^*_y M)$). Therefore, we have
$$
d\delta(y) (\Theta_y(dg(y)))\, =\, 0\, .
$$
Hence $\{\delta,\, g\}(y)\,=\, 0$ by \eqref{e9}. This proves \eqref{s1}.

In view of \eqref{s1} to prove that the Poisson bracket $\{-,\, -\}$ satisfies the
Jacobi identity in \eqref{ji}, it suffices to show that the Jacobi identity is satisfied
by the Poisson bracket operation on functions on a leaf $\mathbb L$, where the Poisson bracket is
defined using the nondegenerate two--form on the leaf
given by $\widehat{\varphi}'_{\mathbb L}$. But condition (2) in the
proposition says that $\widehat{\varphi}'_{\mathbb L}$ is symplectic on a leaf, and hence the
Poisson bracket on a leaf satisfies the Jacobi identity. This completes the proof of the proposition.
\end{proof}

\subsection{Application of the criterion}\label{se3.2}

Using Proposition \ref{prop1}, it will be shown that $\Phi$ constructed in \eqref{e8}
is a Poisson structure on $\mathcal R$.

Consider the homomorphism $\Phi_1\, :\, T^*{\mathcal R}\, \longrightarrow\, T{\mathcal R}$
constructed from $\Phi$ in \eqref{e8} as follows:
$$
v(\Phi_1(\rho)(w)) \,=\, \Phi(\rho)(w\wedge v)
$$
for all $v,\, w\, \in\, T^*_\rho{\mathcal R}$ and $\rho\, \in\, \mathcal R$.
So, $\Phi_1(\rho)\, :\, T^*_\rho{\mathcal R}\, \longrightarrow\, T_\rho{\mathcal R}$,
$\rho\, \in\, \mathcal R$, coincides with
the homomorphism $\widetilde{\Phi}_\rho$ in \eqref{hc}. Therefore, the image of
$\Phi_1$ corresponds to the foliation on $\mathcal R$ given by loci with fixed conjugacy
classes for the punctures $\{x_1,\, \cdots,\, x_m\}$. In particular, the distribution
$\Phi_1(T^*{\mathcal R})$ is integrable; so the first condition
in Proposition \ref{prop1} is satisfied. On each leaf the two--form is symplectic \cite{GHJW}, \cite{BG}, \cite{G};
so the second condition in Proposition \ref{prop1} is also satisfied.

\section{Extended moduli space}

It is shown in \cite{FM} (Theorem 4.3) that the quotient of a symplectic manifold by a group 
action preserving the symplectic structure is a Poisson manifold. We may apply this to the 
symplectic manifold given in Section 2.3 of \cite{J} (the extended moduli space, which is a 
symplectic quotient of the space of all connections on a vector bundle over an oriented 
2-manifold by the based gauge group). The symplectic structure on the extended moduli space is 
given in Section 3.1 of \cite{J}.

In this section only, let $G$ be a compact connected Lie group. The extended moduli space 
$M_{ext}$ may be written as the push-out of the Lie algebra of $G$ and the space of 
representations $M \,=\, {\rm Hom} (\Gamma,\, G)$ of the fundamental group $\Gamma$ of a surface 
with one boundary component, where the map from the Lie algebra to $G$ is the exponential map, 
and the map from the space of flat connections to $G$ is the holonomy around the boundary 
component. In the case of one boundary component, this is summarized by the following 
commutative diagram. Let $M$ be the space ${\rm Hom}(\Gamma, G) $. The symplectic structure is 
defined in \cite{J} in terms of gauge equivalence classes of flat connections. The map ${\rm 
Hol}$ denotes the holonomy of the connection around the boundary component. The symplectic 
structure on ${\rm Hom} (\Gamma, G)/G$ is described from the point of view of representations of 
the fundamental group $\Gamma$ in the work of Goldman \cite{G}, \cite{G2}.
$$
\begin{matrix}
M_{ext} & \longrightarrow & {\mathfrak g}\\
\Big\downarrow& & {\exp}\Big\downarrow~\,~\,~\,~\,~\, \\
M & \stackrel{{\rm Hol}}{\longrightarrow} & G
\end{matrix}
$$
At a regular point, the extended moduli space is a cover of the representation
space ${\rm Hom}(\Gamma, G)$ with fiber the integer lattice of $G$
(the kernel of the exponential map).

For the case of multiple boundary components, we refer to Hurtubise-Jeffrey \cite{HJ}, 
Hurtubise-Jeffrey-Sjamaar \cite{HJS} and Huebschmann \cite{H}.

The description in Section 3.1 of \cite{J} establishes that $M_{ext}$ is symplectic (where it is 
smooth). At points in $\mathfrak g$ where the exponential map is a diffeomorphism, there is a 
local diffeomorphism between an open neighbourhood in $M_{ext}$ and an open neighbourhood of 
${\rm Hom}(\Gamma, G)$. Taking the quotient of $M_{ext}$ by $G$ we thus conclude that ${\rm 
Hom}(\Gamma, G)/G $ is a Poisson manifold.



\begin{thebibliography}{AAAA}

\bibitem{Ar} V. I. Arnol'd, {\it Mathematical methods of classical mechanics}.
Translated from the Russian by K. Vogtmann and A.
Weinstein, Second edition. Graduate Texts in Mathematics, 60. Springer-Verlag, New York, 1989.

\bibitem{At} M. F. Atiyah, Complex analytic connections in fibre
bundles, \textit{Trans. Amer. Math. Soc.} \textbf{85} (1957), 181--207.

\bibitem{AB} M. F. Atiyah and R. Bott, The Yang-Mills equations over Riemann surfaces,
{\it Philos. Trans. Roy. Soc. London} {\bf 308} (1983), 523--615.

\bibitem{Au} M. Audin, Lectures on gauge theory and integrable systems.
In {\em Gauge Theory and Symplectic Geometry} (J. Hurtubise, F. Lalonde,
ed., Kluwer, 1997), 1--48.

\bibitem{BG} I. Biswas and K. Guruprasad, Principal bundles on open surfaces and
invariant functions on Lie groups, {\it Internat. Jour. Math.} {\bf 4} (1993), 535--544.

\bibitem{GHJW} K. Guruprasad, J. Huebschmann, L. Jeffrey and A. Weinstein,
Group systems, groupoids, and moduli spaces of
parabolic bundles, {\it Duke Math. Jour.} {\bf 89} (1997), 377--412.

\bibitem{FM} R. L. Fernandes and I. Marcut, {\em Lectures
on Poisson geometry}, Springer, 2015.

\bibitem{G} W. Goldman, The symplectic nature of fundamental groups of
surfaces, {\em Adv. Math.} {\bf 54} (1984), 200--225.

\bibitem{G2} W. Goldman, Invariant functions on Lie groups and
Hamiltonian flows of surface group representations, {\em Invent. Math.}
{\bf 85} (1986), 263 --302.

\bibitem{H} J. Huebschmann, On the variation of the Poisson structures of certain moduli spaces, 
{\em Math. Ann.} {\bf 319} (2001), 267--310.

\bibitem{HJ} J. Hurtubise and L. Jeffrey, Representations with weighted frames and framed parabolic
bundles, {\em Canadian Jour. Math.} {\bf 52} (2000), 1235--1268.

\bibitem{HJS} J. Hurtubise, L. Jeffrey and R. Sjamaar, Moduli of Framed Parabolic
Sheaves, {\em Ann. Global Anal. Geom.} {\bf 28} (2005), 351--370.

\bibitem{J} L. Jeffrey, Extended moduli spaces of flat connections on 
Riemann surfaces. {\it Math. Annalen} {\bf 298} (1) (1994), 667--692.

\bibitem{MS} V. B. Mehta and C. S. Seshadri, Moduli of vector bundles on curves
with parabolic structure, {\it Math. Ann.} {\bf 248} (1980), 205--239.

\bibitem{mein} E. Meinrenken, {\em Introduction to Poisson Geometry},
\begin{verbatim}
http://www.math.toronto.edu/mein/teaching/MAT1341_PoissonGeometry/Poisson8.pdf
\end{verbatim}

\end{thebibliography}
\end{document}